\documentclass[11pt]{amsart}
\usepackage{amsmath,amssymb,amsthm}
\usepackage{hyperref}
\usepackage{mathrsfs}
\usepackage{graphicx}
\usepackage{tikz}
\usepackage{enumerate,enumitem}
\usepackage{float}


\theoremstyle{plain}
\newtheorem{theorem}{Theorem}[section]
\newtheorem{lemma}[theorem]{Lemma}

\newtheorem{proposition}[theorem]{Proposition}
\newtheorem{corollary}[theorem]{Corollary}

\theoremstyle{definition}

\newtheorem{remark}[theorem]{Remark}
\newtheorem{example}[theorem]{Example}

\newtheorem*{namedthm*}{\thistheoremname}
\newcommand{\thistheoremname}{} 

\newcommand{\dda}{\mathord{\mbox{\makebox[0pt][l]{\raisebox{-.4ex}{$\downarrow$}}$\downarrow$}}}

\newcommand{\ua}{\mathord{\uparrow}}
\newcommand{\da}{\mathord{\downarrow}}
\newcommand{\rom}[1]{\rm{\uppercase\expandafter{\romannumeral #1}}}

\newcommand{\fin}{\mathrm{fin}}
\newcommand{\set}[2]{\{#1\mid#2\}}

\newcommand{\pt}[1]{\mathrm{pt}(#1)}

\makeatletter
\def\ps@pprintTitle{%
  \let\@oddhead\@empty
  \let\@evenhead\@empty
  \def\@oddfoot{\reset@font\hfil\thepage\hfil}
  \let\@evenfoot\@oddfoot
}
\makeatother


\setlength{\textwidth}{460pt} \setlength{\textheight}{600pt}
\setlength{\topmargin}{0pt} \setlength{\oddsidemargin}{0pt}
\setlength{\evensidemargin}{0pt} \setlength{\textwidth}{460pt}
\setlength{\textheight}{620pt}
\parskip 0.05cm



\begin{document}

\title{Notes on countable frames} \thanks{This research is conducted at TYMC during the Seminar on Domain Theory and Its Application (December 14-20, 2025).}

\author{Xiaodong Jia, Xiaoyong Xi}

\address{X.\,Jia,  School of Mathematics, Hunan University, Changsha, Hunan, 410082, China. Email:  {\rm jiaxiaodong@hnu.edu.cn} }

\address{X.\,Xi, School of Mathematics and Statistics, Yancheng Teachers University, Yancheng 224002, China. Email: {\rm xixy@yctu.edu.cn}}

\begin{abstract}
Matthew de Brecht raised the question of whether countable frames are continuous lattices. We prove that the continuity of a countable frame implies the quasicontinuity of its corresponding spectrum in the dual specialization order. We further show that this question admits a positive answer if the frame's spectrum is a $T_1$ space or a Scott space. In general, we confirm the existence of non-continuous countable frames. This work also partially addresses an open problem proposed by Jimmie Lawson and Michael Mislove in 1990, which concerns the characterization of when the spectrums of spatial frames are Scott spaces.
\end{abstract}
\keywords{Frames, Stone duality, domains, algebraic domains.}

\maketitle

\section{Introduction}

At the Seminar on Domain Theory and Its Application held at Tianyuan Mathematics Research Center (TYMC, December 14-20, 2025), Matthew de~Brecht asked whether countable frames are actually continuous lattices. As every countable frame is spatial~\cite[Lemma~1.1]{erne2019web}, the spectrum, through the Stone duality, of a countable frame is exactly a sober topological space that has only countably many open sets.  In the light of the Hofmann-Mislove Theorem~\cite[Theorem II-1.20]{gierz03}, sober spaces with a continuous open set lattice are precisely locally compact sober spaces~\cite[Theorem 8.3.10]{goubault13a}. Then de~Brecht's question is equivalent to asking whether every sober topological space with only countably many open sets is actually locally compact. In this note we show that such a space is indeed locally compact (even finite) when it is $T_1$, and is a quasialgebraic domain in the sense of Dana Scott's domain theory, when it is a  directed-complete poset (dcpo) equipped with the Scott topology, i.e., when it is a \textit{Scott space}. In the latter case, we show that the countable frame in consideration must be continuous. That  partially answers an open problem posed by Jimmie Lawson and Michael Mislove in~\cite[Problem 528]{lawson-mislove-90}, for characterizing when the spectrums of spatial frames are Scott spaces.

However, in general, we give examples to show that there exist countable frames that are not continuous. Hence de~Brecht's question has a negative answer generally. 

\section{Preliminaries}

\subsection{Order and topology}
We will use the concept of directed-complete partially ordered sets, \textit{dcpo}'s for short. They are posets in which all directed subsets have suprema. Let $P$ be a poset and $x\in P$, we use $\ua x$ to denote 
$\set{y\in P}{x\leq y}$. For $A\subseteq P$, let $\ua A = \bigcup_{x\in A} \ua x$. A subset $A$ of $P$ is called an \textit{upper set} if $A = \ua A$. Dually we can define $\da x$, $\da A$ and \textit{lower sets}. 
In a poset $P$, an upper subset $O$ is said to be \textit{Scott open} if for any directed subset $D$ of $P$, that the supremum $\sup D$ of $D$ exists and $\sup D \in O$ imply $D\cap O \not= \emptyset$. All Scott open subsets of $P$ form the \textit{Scott topology} $\sigma P$ on $P$. A dcpo equipped with the Scott topology is referred as to a \textit{Scott space}. We will also use the \textit{upper topology} on $P$, which is generated, as subbasic closed sets, by sets $\da F$ for finite subsets $F$ of $P$. 

On a topological space $X$, the specialization preorder $\leq$ on $X$ is defined as $x\leq y$ if and only if $x$ is in the closure of $\{y\}$, for $x, y\in X$. The space $X$ is $T_0$ if and only if the preorder $\leq$ is an partial order on $X$. 
In this note, only $T_0$ spaces are considered and orders used on spaces are the specialization orders until stated otherwise. 
An \textit{irreducible} subset $A$ of $X$ is a set such that $A \subseteq B\cup C$ implies that $A\subseteq B$ or $A\subseteq C$ for all closed subsets $B$ and $C$. A \textit{sober} space is a space in which every irreducible closed subset is the closure of some unique singleton. Every sober space is a $T_0$ space, and even a \textit{monotone convergence space} \cite[Exercise O-5.15]{gierz03}, in the sense that the specialization order on $X$ turns $X$ into a dcpo, and the original opens on $X$ are Scott open. Take a closed subset $C$ in a sober space $X$. So $C$ is a Scott closed subset in $X$, and then every element in $C$ is below some element in the set $\max C$ of all maximal elements of $C$. 

\subsection{Domain theory} Dana Scott's domain theory \cite{gierz03, abramsky94} provides a general mathematical framework for denotational semantics for functional programming languages. In a poset $P$, an element $x$ is \textit{way-below} $y$, $x\ll y$ in symbols, if for every directed subset $D \subseteq P$ such that $\sup D$ exists, that $y\leq \sup D$ implies that $x\leq d$ for some $d\in D$. We use $\dda y$ to denote the set $\set{x\in P}{x\ll y}$. A poset $P$ is called a \textit{continuous poset}, if for each $y\in P$, $\dda y$ is directed and $y = \sup \dda y$. A continuous dcpo is often called as a \textit{continuous domain} or simply a \textit{domain}.
A poset $P$ is called \textit{algebraic}, if for each $y\in P$, the set $\set{x\in P}{x\ll x \leq y}$ is directed and has $y$ as its supremum. Algebraic dcpo's are called \textit{algebraic domains}. Obviously, algebraic domains are continuous domains. 
The way-below relation on posets can be generalized to a relation between subsets of posets. 
In a poset $P$, we say a subset $F$ is way-below a subset $G$, in symbols $F\ll G$, if for every directed subset $D \subseteq P$ such that $\sup D$ exists, that $\sup D\in \ua G$ implies that $d\in \ua F$ for some $d\in D$. We say that 
a poset $P$ is a \textit{quasicontinuous poset}, if for each $x\in P$, the set $\fin (x) = \set{F\ll x}{F\subseteq_\fin P}$ of all finite subsets of $P$ that are way-below $x$ is directed in the Smyth preorder $\sqsubseteq$, and $\ua x = \bigcap_{F\in \fin(x)} \ua F$, where $F\sqsubseteq G$ if and only if $G\subseteq \ua F$. quasicontinuous dcpo's are called \textit{quasicontinuous domains}. If in addition, for each $x\in P$, the set $\fin_c (x) = \set{F\subseteq_\fin P}{F\ll F\ll x}$ is directed in the Smyth preorder and $\ua x = \bigcap_{F\in \fin_c(x)}\ua F$, then $P$ is called \textit{quasialgebraic}. Similarly, quasialgebraic dcpo's are called \textit{quasialgebraic domains}. 
One could easily see that algebraic domains are quasialgebraic, and continuous domains are quasicontinuous. Quasicontinuous domains, hence quasialgebraic domains and continuous domains, are sober in the Scott topology. 
Morphisms between dcpo's are always considered to be \textit{Scott-continuous}, that is, monotone maps that preserve suprema of directed subsets. 

\subsection{Stone duality} The celebrated Stone duality sets up an adjunction between the category of $T_0$ topological spaces and the category of frames. When restricted to sober spaces and spatial frames, the adjunction becomes an equivalence of categories. In general, a frame is a complete lattice on which finite infima distribute over arbitrary suprema, and accordingly, morphisms between frames are maps that preserve finite infima and arbitrary suprema. 
A \textit{spatial} frame is a frame in which every element $x$ can be written as the infimum of all \textit{prime} elements that are above $x$. In the Stone duality, every $T_0$ topological space $X$ is sent to the open set lattice $\mathcal O(X)$, which is a spatial frame, and dually  every frame $L$ to the set of all its prime elements equipped with the \textit{hull-kernel} topology. The resulting space is called the \textit{spectrum} of~$L$ and is always sober. The spectrum of~$L$ will be denoted by $\mathrm{pt}(L)$. For a nice  exposition on Stone duality, the reader is referred to \cite[Chapter 8]{goubault13a}. As every countable frame is spatial \cite[Lemma 1.1]{erne2019web}, the spectrums of countable frames are exactly sober topological spaces with  countably many open sets, and conversely, the Stone duals of such spaces are obviously countable frames. 

\subsection{Lawson and Mislove's open question} The Stone duality restricts to many subclasses of sober spaces and the corresponding classes of spatial frames. To illustrate, we have
 \begin{itemize}
     \item the category of domains (equipped with the Scott topology) is in dual with that of completely distributive lattices \cite{lawson79}; 
     \item the category of quasicontinuous domains (equipped with the Scott topology) is in dual with that of distributive \textit{hypercontinuous} lattices \cite[Proposition VII-3.8]{gierz03};
     \item the category of locally compact sober spaces are in dual with that of distributive continuous lattices \cite[Proposition V-5.20]{gierz03}.
 \end{itemize}

In the first two bullets, we notice that the hull-kernel topology on the spectrum coincides with the Scott topology. However in the third bullet, the spectrums are locally compact sober spaces and the (hull-kernel) topology on them may not be the Scott topology arisen from the specialization order. In other words, unlike the other two cases, the spectrums need not be Scott spaces. Jimmie Lawson and Michael Mislove asked in~\cite[Problem 528]{lawson-mislove-90} the question to characterize those distributive continuous lattices for which the spectrum is a Scott space. We will see that if a frame $L$ is countable, without continuity assumed, then among other equivalent properties, $\pt{L}$ is a Scott space is equivalent to that $L$ is continuous. See Theorem~\ref{scott-main-theorem}. 

\section{The general case}

\begin{lemma}\label{lemma1}
    Let $X$ be a sober topological space. If $C$ is a closed subset of $X$ and $\max C$ is infinite, then the open set lattice $\mathcal O(X)$ of $X$ is uncountable. 
\end{lemma}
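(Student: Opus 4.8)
The plan is to prove the contrapositive: if $\mathcal{O}(X)$ is countable, then every closed subset of $X$ has only finitely many maximal elements. So assume $\mathcal{O}(X)$ is countable. A closed subspace $C$ of a sober space is sober, its open-set lattice is the quotient frame $\{U\cap C: U\in\mathcal{O}(X)\}$ (hence still countable), and its specialization order is inherited from $X$; so it suffices to show that a sober space $X$ with countably many open sets has only finitely many maximal elements. I would also record the useful fact that such an $X$ is itself a countable set, its points being in bijection with the prime elements of the countable frame $\mathcal{O}(X)$. Now suppose $\max X$ is infinite; I aim to produce $2^{\aleph_0}$ distinct closed sets, contradicting countability of $\mathcal{O}(X)$.

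The engine is the elementary remark that an infinite \emph{discrete subspace} suffices: if $A\subseteq X$ is infinite and every $a\in A$ satisfies $a\notin\overline{A\setminus\{a\}}$, then for $B\subseteq A$ one has $\overline{B}\cap A=B$ (the closure in $X$ of a relatively closed subset of the discrete subspace $A$ meets $A$ in exactly $B$), so the $2^{\aleph_0}$ sets $\overline{B}$, $B\subseteq A$, are pairwise distinct. Thus the whole task reduces to \emph{finding an infinite discrete subspace of $X$}, and for this I would run a Cantor--Bendixson analysis: let $X^{(0)}=X$, $X^{(\alpha+1)}=X^{(\alpha)}\setminus\{\text{isolated points of } X^{(\alpha)}\}$, $X^{(\lambda)}=\bigcap_{\alpha<\lambda}X^{(\alpha)}$, with perfect kernel $X^{(\infty)}$; each derivative is closed, and each level $L_\alpha=X^{(\alpha)}\setminus X^{(\alpha+1)}$ is a discrete subspace of $X$. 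If $X$ is scattered ($X^{(\infty)}=\emptyset$) then $\max X=\bigsqcup_\alpha(\max X\cap L_\alpha)$ with countably many nonempty summands (as $X$ is countable): either some single $\max X\cap L_\alpha$ is infinite --- an infinite discrete subspace --- or infinitely many $L_\alpha$ contain a maximal element $m_\alpha$, and then for an increasing $\omega$-sequence $\alpha_0<\alpha_1<\cdots$ of such indices the set $\{m_{\alpha_n}:n\in\omega\}$ is discrete, since $\{m_{\alpha_k}:k>n\}\subseteq X^{(\alpha_n+1)}$ sits in a closed set omitting $m_{\alpha_n}$, while the finite set $\{m_{\alpha_k}:k<n\}$ has closure $\downarrow\{m_{\alpha_k}:k<n\}$, which also omits the maximal point $m_{\alpha_n}$. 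Either way we win.

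The remaining --- and genuinely hard --- case is a nonempty perfect kernel $K:=X^{(\infty)}$. Since $K$ is a closed subspace it is sober with countably many opens, it is a down-set so $\max X\cap K\subseteq\max K$, and (after applying the previous paragraph to the maximal elements of $X$ lying in the scattered levels, in case there are infinitely many) we may assume $\max K$ is infinite. So it remains to show: \emph{an infinite sober space with no isolated points and with infinitely many maximal elements has an infinite discrete subspace.} This is the crux, and it is not a soft point: a perfect sober space with countably many opens does exist (e.g.\ $\mathbb{N}\cup\{\infty\}$ with the Scott topology), so the hypothesis of infinitely many maximal elements must be used essentially, and so must soberness --- for instance the upper topology on a countable disjoint union of copies of $\mathbb{N}\cup\{\infty\}$ has countably many opens and infinitely many maximal elements, yet fails to be sober. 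My plan here is to exploit soberness: in the perfect case each maximal point $m$, being non-isolated, is the supremum of a directed set of points strictly below it, and one must show that these witnessing directed sets, across the infinitely many maximal points, supply enough mutually ``independent'' strictly-lower points to assemble the desired discrete subspace. Controlling this --- knowing when the closure of a union of such witnesses can swallow points belonging to other witnesses, or the maximal points themselves --- is where I expect the real work of the proof to lie.
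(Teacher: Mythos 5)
Your reduction to a closed subspace, and your ``engine'' (an infinite subset $A$ with $a\notin\overline{A\setminus\{a\}}$ for all $a\in A$ yields $2^{\aleph_0}$ pairwise distinct closed sets $\overline{B}$, $B\subseteq A$) are both correct, and the engine is essentially the endgame of the paper's proof as well. But the proposal does not prove the lemma: you explicitly leave open the case of a nonempty perfect kernel with infinitely many maximal elements, which you yourself identify as ``the crux'' and ``where I expect the real work of the proof to lie.'' That case is not dispatched by your sketch, and the one concrete idea you offer for it rests on an unjustified premise: in a sober space a non-isolated maximal point $m$ need not be the supremum of a directed set of points strictly below it --- non-isolation only says that every open neighborhood of $m$ contains some other point, and that point need not lie below $m$. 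So the Cantor--Bendixson route, as written, stalls exactly where the hypothesis of sobriety has to do its work.

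The paper's proof shows that the whole case analysis is unnecessary; sobriety is used once, and head-on. Suppose that for every $c\in\max C$ and every open $U\ni c$ the set $\max C\setminus U$ is finite. Then for any two opens $U,V$ meeting $\max C$, the set $\max C\setminus(U\cap V)$ is a union of two finite sets, so $U\cap V$ still meets the infinite set $\max C$; hence $C$ is irreducible, hence by sobriety the closure of a single point, contradicting $|\max C|=\infty$. This produces one maximal point $x_1$ and an open $U_1\ni x_1$ with $\max C\setminus U_1$ infinite; passing to the closed set $C\setminus U_1$ and iterating (shrinking $U_{i+1}$ by removing $\da x_j$ for $j\le i$ if necessary) yields points $x_1,x_2,\dots$ and opens $U_1,U_2,\dots$ with $x_i\in U_j$ iff $i=j$, and then the unions $\bigcup_{i\in I}U_i$, $I\subseteq\mathbb N$, are pairwise distinct. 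In short: the infinite ``discrete'' family you are after is extracted directly from the failure of irreducibility, not from a derivative analysis, and that is the step your proposal is missing.
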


\begin{proof}
    As sober spaces are monotone convergence spaces, the set $\max C$ of maximal elements of $C$ is nonempty for every nonempty closed subset $C$. 
    We first claim that for every closed subset $C$ of $X$, if $\max C$ is infinite, then there is some $c\in \max C$ such that $c$ has an open neighborhood $U_c$ with the property that $\max C\setminus U_c$ is infinite. Assume that the claim is not true, then we have a closed subset $C$ of $X$ and $\max C$ is infinite such that, for every $c\in \max C$, every open neighborhood $U_c$ of $c$, the set $\max C\setminus U_c$ is finite. 
    Consider any given open subsets $U, V$ such that 
    $U\cap \max C \not= \emptyset$ and  $V\cap \max C \not= \emptyset$. Then we know that both $\max C\setminus U$ and  $\max C \setminus V$ are finite. It follows that their union $\max C\setminus (U\cap V)$ is finite, 
    and then $U\cap V \cap \max C$ is an infinite set, whence, $U\cap V \cap \max C \not= \emptyset$. This implies that $C$ is an irreducible closed set. As $X$ is a sober space, $C = \da \max C$ would have been a closure of a singleton. But $\max C$ is infinite. This is impossible. So the claim holds. 

    Now by the claim, we know there is an $x_1 \in A$ and an open set $U_1$ containing $x_1$ with that $\max C\setminus U_1$ is infinite. Notice that $C\setminus U_1$ is closed and $\max (C\setminus U_1) = \max C\setminus U_1$ is infinite. Applying the claim again to $C\setminus U_1$, we find an $x_2 \in \max C\setminus U_1$ and an open set $U_2$ containing $x_2$ but not $x_1$ (if $U_2$ contains $x_1$, update $U_2$ to $U_2\setminus \da x_1$) such that $\max C \setminus (U_1 \cup U_2)$ is infinite. Repeat this procedure  infinitely many times, and we get a sequence of points $x_1, x_2, \cdots x_i, \cdots, i\in \mathbb N$ and a sequence of open subsets $U_1, U_2, \cdots, U_i, \cdots, i\in \mathbb N$, such that for each $i\in \mathbb N$, $x_i\in U_i$ and $U_i$ does not contain $x_j$, where $j< i$. Note that for $j> i$, $x_j$ is chosen in $\max C\setminus \bigcup_{k\leq j-1} U_i$. So $U_i$ does not contain $x_j$ for $j> i$ neither. That is, $x_i \in U_j$ if and only if $i = j$, for $i, j\in \mathbb N$.

    Now take $I, J\subseteq \mathbb N$, and suppose that $k\in I \setminus J$ for some $k\in \mathbb N$. Then $x_k$ is an element in $\bigcup_{i\in I} U_i$ but not in $\bigcup_{j\in J} U_j$.
    Hence $\bigcup_{i\in I} U_i, I\subseteq \mathbb N$ form a family of open subsets of $X$, and this family is uncountable.
\end{proof}

\begin{lemma}\label{lemma2}
    Let $X$ be sober topological space. If the open set lattice $\mathcal O(X)$ of $X$ is countable, then the topology on $X$ is the upper topology on $X$ with the specialization order. In particular, all closed sets are of the form $\da F$ for some finite subset $F$ of $X$.
\end{lemma}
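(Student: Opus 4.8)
The plan is to derive the statement almost immediately from Lemma~\ref{lemma1} together with the fact, recalled in Section~2, that in a sober space every closed set $C$ is Scott closed and satisfies $C=\da\max C$. So I would start by fixing a closed subset $C$ of $X$. Since $\mathcal O(X)$ is countable, Lemma~\ref{lemma1}, read contrapositively, forbids $\max C$ from being infinite; hence $F:=\max C$ is a finite subset of $X$, and $C=\da\max C=\da F$. This already proves the ``in particular'' assertion: every closed subset of $X$ has the form $\da F$ for some finite $F\subseteq X$.

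To see that the topology of $X$ is then exactly the upper topology of $(X,\le)$, I would compare the two families of closed sets. On the one hand, each subbasic closed set $\da x=\overline{\{x\}}$ of the upper topology is closed in $X$; a finite union of such sets is a set $\da F$ with $F$ finite, hence closed in $X$, and since the closed sets of $X$ are stable under arbitrary intersections, every closed set of the upper topology is closed in $X$. On the other hand, by the previous paragraph every closed set of $X$ is of the form $\da F$ with $F$ finite, which is a finite union of subbasic closed sets of the upper topology and therefore closed in that topology. Thus the two families of closed sets coincide, and the two topologies agree.

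There is no serious obstacle here: essentially all of the work is carried by Lemma~\ref{lemma1}. The only points that demand a little care are the use of the identity $C=\da\max C$, which ultimately rests on $C$ being closed under directed suprema (a Zorn's lemma argument, already recorded in Section~2), and the observation that knowing all closed sets of $X$ to be the sets $\da F$ genuinely determines the topology — one must check both inclusions between the topology of $X$ and the upper topology, and the inclusion ``every open set of $X$ is upper-open'' is precisely the place where the finiteness of $\max C$ furnished by Lemma~\ref{lemma1} is used.
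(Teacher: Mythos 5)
Your proposal is correct and follows essentially the same route as the paper: derive $C=\da\max C$ from sobriety (monotone convergence), apply the contrapositive of Lemma~\ref{lemma1} to get $\max C$ finite, and then compare closed sets to identify the topology with the upper topology. The only cosmetic difference is that you verify ``upper topology $\subseteq$ topology of $X$'' by hand from $\da x=\overline{\{x\}}$, whereas the paper invokes the standard fact that the upper topology is the coarsest $T_0$ topology inducing the given specialization order; both are fine.
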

\begin{proof}
    Obviously, the upper topology is coarser than the topology on $X$. This is because $X$ is sober hence $T_0$, and the upper topology is the coarsest $T_0$ topology that recovers the specialization order on $X$. 

    Conversely let  $C$ be a closed subset of $X$. As $X$ is sober, hence monotone convergence \cite[Exercise O-5.15]{gierz03}, so $C$ is Scott closed in the specialization order. Then we know that $C = \da \max C$. 
    As $\mathcal O(X)$ is countable, by the contraposition of Lemma~\ref{lemma1}, $\max C$ is finite. Obviously, $C = \da \max C$ is closed in the upper topology on $X$. 
\end{proof}

The following two corollaries are straightforward consequences of Lemma~\ref{lemma2}.

\begin{corollary}\label{sobert1isfinite}
    Let $X$ be sober $T_1$ topological space. If the open set lattice $\mathcal O(X)$ of $X$ is countable, then $X$, hence $\mathcal O(X)$, must be finite. 
\end{corollary}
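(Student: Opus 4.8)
The plan is to read this off directly from Lemma~\ref{lemma2} together with the defining property of $T_1$ spaces. First I would record that in a $T_1$ space every singleton $\{x\}$ is closed, so $x$ lies in the closure of $\{y\}$ precisely when $x=y$; hence the specialization order on $X$ is the discrete (antichain) order, and $\da F = F$ for every subset $F\subseteq X$. Next, since $\mathcal O(X)$ is countable and $X$ is sober, Lemma~\ref{lemma2} applies and tells us that every closed subset of $X$ has the form $\da F$ for some finite subset $F$ of $X$; combining this with the previous observation, every closed subset of $X$ is finite. In particular $X$, being a closed subset of itself, is finite, and then $\mathcal O(X)$ — a family of subsets of the finite set $X$ — is finite as well.

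I do not anticipate any real obstacle here: the whole argument is essentially a one-line consequence of Lemma~\ref{lemma2} once one notes that $T_1$ forces the specialization order to be trivial. The only point requiring a moment's care is to confirm that Lemma~\ref{lemma2} is applicable, which it is, since a sober $T_1$ space is in particular a sober space with a countable open set lattice.
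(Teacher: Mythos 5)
Your proof is correct and follows essentially the same route as the paper: both reduce the statement to Lemma~\ref{lemma2} via the observation that $T_1$ trivializes the specialization order (the paper phrases this as $X=\max X$, you phrase it as $\da F=F$), so that the closed set $X$ itself must be finite.
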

\begin{proof}
    Note that $X$ is closed, and $X = \max X$ as $X$ is $T_1$. So $X$ is finite by Lemma~\ref{lemma2}.
\end{proof}

\begin{corollary}    
    Let $L$ be a countable frame. If $\mathrm{pt}(L)$, the spectrum of $L$ is $T_1$, then $L$ is finite. 
\end{corollary}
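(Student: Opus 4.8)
The plan is to reduce the statement to Corollary~\ref{sobert1isfinite} via Stone duality. Since $L$ is a countable frame, it is spatial by \cite[Lemma~1.1]{erne2019web}, so the unit of the Stone adjunction is an isomorphism $L \cong \mathcal{O}(\mathrm{pt}(L))$. In particular $\mathcal{O}(\mathrm{pt}(L))$ is countable, being isomorphic to the countable lattice $L$. Moreover $\mathrm{pt}(L)$ is sober, as the spectrum of any frame is always sober, and by hypothesis it is a $T_1$ space.

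With these observations in place, I would simply apply Corollary~\ref{sobert1isfinite} to the space $X = \mathrm{pt}(L)$: a sober $T_1$ space whose open set lattice is countable must be finite, and hence its open set lattice is finite. Transporting back along the isomorphism $L \cong \mathcal{O}(\mathrm{pt}(L))$ gives that $L$ is finite.

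There is essentially no obstacle here; the only point requiring a moment's care is the appeal to spatiality of countable frames to guarantee that $\mathrm{pt}(L)$ genuinely has only countably many open sets (equivalently, that the counit/unit is an iso), which is exactly what makes Corollary~\ref{sobert1isfinite} applicable. Everything else is a direct translation through the duality.
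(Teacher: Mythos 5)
Your proposal is correct and follows exactly the same route as the paper: use spatiality of countable frames to get $L \cong \mathcal{O}(\mathrm{pt}(L))$, note that $\mathrm{pt}(L)$ is a sober $T_1$ space with countably many opens, and apply Corollary~\ref{sobert1isfinite}. No differences worth noting.
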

\begin{proof}
    As $L$ is a countable frame, it is spatial~\cite[Lemma~1.1]{erne2019web}. Hence the spectrum space  $\mathrm{pt}(L)$ is a sober $T_1$ space, and its open set lattice is isomorphic to $L$, hence that the family of opens of $\mathrm{pt}(L)$ is countable. Then the statement is a corollary to Corollary~\ref{sobert1isfinite}. 
\end{proof}

\begin{theorem}\label{main}
    Let $X$ be sober topological space with a countable  open set lattice $\mathcal O(X)$. Consider the following statements:
\begin{enumerate}
    \item $\mathcal O(X)$ is a continuous domain;
    \item $\mathcal O(X)$ is an algebraic domain;
    \item $(X, \geq)$, where $\geq$ is the dual of the specialization order on $X$, is a quasicontinuous poset;
    \item $(X, \geq)$, where $\geq$ is the dual of the specialization order on $X$, is a quasialgebraic poset.
\end{enumerate}
Then $(1)$ is equivalent to $(2)$, and either $(1)$ or $(2)$ implies $(3)$ and $(4)$. If $(X, \geq)$ is a dcpo, then all the four statements are equivalent. 
\end{theorem}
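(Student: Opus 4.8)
The implication $(2)\Rightarrow(1)$ is immediate, since every algebraic domain is continuous, so $(1)\Leftrightarrow(2)$ reduces to $(1)\Rightarrow(2)$. By the Hofmann--Mislove theorem recalled in the introduction, $(1)$ says exactly that $X$ is a locally compact sober space, and by Lemma~\ref{lemma2} its topology is then the upper topology of the specialization order, its closed sets being the finitely generated lower sets $\da F$. I will work throughout with this concrete picture and with the dual poset $Q=(X,\ge)$, noting that the open sets of $X$ are precisely the sets $X\setminus\ua_Q F$ with $F$ finite.

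I would prove $(1)\Rightarrow(2)$ by showing that the compact open subsets of $X$ form a basis; equivalently, that every $U\ll V$ in $\mathcal O(X)$ factors through a compact open set, which together with continuity forces each $V$ to be the directed supremum of the compact opens it contains (this family is directed, since the union of two compact opens is again compact and open). Given $x\in V$, local compactness provides a compact saturated neighbourhood $K$ of $x$ with $K\subseteq V$; since $\mathcal O(X)$ is countable, the filter of open neighbourhoods of $K$ has a coinitial decreasing sequence $U_0\supseteq U_1\supseteq\cdots$ with $\bigcap_nU_n=K$ and all $U_n\subseteq V$. If this sequence is eventually constant, the constant value is a compact open neighbourhood of $x$ inside $V$ and we are done; otherwise, picking $p_n\in U_n\setminus U_{n+1}$, the closure $\overline{\{p_n:n\in\mathbb N\}}$ equals $\da F$ for some finite $F$ by Lemma~\ref{lemma2}, and pigeonholing the $p_n$ among the finitely many generators of $F$, together with the fact that each $U_n$ is an upper set, should contradict the strict decrease $U_n\supsetneq U_{n+1}$. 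The moral is that a continuum-like descending tower of opens cannot be fitted below $V$ inside a countable lattice; I expect this step --- promoting continuity to algebraicity --- to be the main obstacle of the theorem, precisely because a countable continuous frame is genuinely more special than an arbitrary continuous frame, which need not be algebraic.

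For $(1)\Rightarrow(3)$ and $(1)\Rightarrow(4)$ I would transport local compactness (respectively the algebraicity just obtained) into $Q=(X,\ge)$. For a point $x$ and an open neighbourhood $V$ of $x$, a compact saturated neighbourhood $K$ of $x$ inside $V$, together with the fact that its interior is an open of the form $X\setminus\ua_Q G$ (Lemma~\ref{lemma2}), contributes a finite set $G$ that is way-below $x$ in $Q$ in the subset (Smyth) sense; letting $V$ range over the open neighbourhoods of $x$ and identifying the closures involved gives $\ua_Q x=\bigcap\{\ua_Q G : G\ll_Q x\}$, while directedness of $\{G\ll_Q x\}$ in the Smyth preorder mirrors that of the filter of compact saturated neighbourhoods of $x$. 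This gives $(3)$; for $(4)$ one runs the same argument with compact \emph{open} neighbourhoods, available by $(2)$, in place of compact saturated ones, so that the finite sets $G$ produced additionally satisfy $G\ll_Q G\ll_Q x$.

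Finally, assume $Q=(X,\ge)$ is a dcpo; since quasialgebraic posets are quasicontinuous, to close the loop it suffices to derive $(1)$ from $(3)$, so suppose $Q$ is a quasicontinuous domain. In a quasicontinuous domain $\ua_Q F=\bigcap\{\ua_Q G : G\ll_Q F\}$ for every finite $F$, hence $X\setminus\ua_Q F=\bigcup\{X\setminus\ua_Q G : G\ll_Q F\}$, so it is enough to show that $X\setminus\ua_Q G\ll X\setminus\ua_Q F$ in $\mathcal O(X)$ whenever $G\ll_Q F$. Unwinding the definition of way-below in $\mathcal O(X)$ and using that the closed sets of $X$ are precisely the $\ua_Q H$ with $H$ finite, this reduces to: if $(\ua_Q H_i)_i$ is a filtered family of such closed sets with $\bigcap_i\ua_Q H_i\subseteq\ua_Q F$, then $\ua_Q H_i\subseteq\ua_Q G$ for some $i$. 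The family $\{H_i\}$ is then directed in the Smyth preorder, so Rudin's Lemma yields a directed subset of $Q$ meeting every $\ua_Q H_i$ whose supremum lies in $\ua_Q F$, and $G\ll_Q F$ forces the required containment. Thus every open $X\setminus\ua_Q F$ is the directed supremum of opens way-below it, so $\mathcal O(X)$ is continuous, i.e.\ $(1)$ holds; combined with $(1)\Leftrightarrow(2)$ and $(4)\Rightarrow(3)$, all four statements are equivalent when $(X,\ge)$ is a dcpo. The Rudin's-Lemma step is the other delicate point, but it is fairly standard in the theory of quasicontinuous domains.
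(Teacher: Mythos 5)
Your plan---reduce to the concrete picture of Lemma~\ref{lemma2} and transport way-below data between $\mathcal O(X)$ and $(X,\ge)$---is in the right spirit, but the central steps do not go through. For $(1)\Rightarrow(2)$, your dichotomy is false: the filter of open neighbourhoods of a compact saturated set $K$ need not stabilize, and when it does not, the pigeonhole yields no contradiction. From $p_n\le f$ for infinitely many $n$ and $U_n$ being upper sets you only get $f\in U_n$ for all $n$, i.e.\ $f\in K$; this is perfectly compatible with $p_n\notin U_{n+1}$, since membership of $f$ in an upper set says nothing about elements below $f$. Concretely, take $X=\mathbb N\cup\{\infty\}$ with the Scott topology (opens $\ua n$), whose open set lattice \emph{is} algebraic: $K=\{\infty\}$ is compact saturated with strictly decreasing neighbourhood basis $U_n=\ua n$, $p_n=n$, and $\overline{\{p_n\}}=\da\infty$ with $\infty\in K$ --- exactly your situation, with no contradiction available. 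What you are attempting here in a few lines is precisely the theorem of \cite{jia-coutabledomains} that countable continuous domains are algebraic, which the paper simply cites; it is not recoverable from this argument.

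The step $(1)\Rightarrow(3)$ is set up on the wrong side: your finite set $G$ with $\mathrm{int}(K)=X\setminus\ua_Q G$ satisfies $x\in\mathrm{int}(K)$, hence $x\notin\ua_Q G$, whereas any finite $G\ll_Q x$ must have $x\in\ua_Q G$ (take $D=\{x\}$ in the definition of the way-below relation between subsets); so these sets can never witness $\ua_Q x=\bigcap\ua_Q G$. The relevant finite sets are instead the generators of the closed sets $\da G=\ua_Q G$ \emph{containing} $x$, which is how the paper proceeds for $(2)\Rightarrow(4)$: identify $\mathcal O(X)$ with $(\mathcal C(X),\supseteq)=\{\da F: F\ \text{finite}\}$ and check that $\da F\ll\da F\ll\da x$ there transfers to $F\ll F\ll x$ in $(X,\ge)$. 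Finally, in your $(3)\Rightarrow(1)$ step, applying Rudin's Lemma to $\{H_i\}$ only produces a single $d\in\ua_Q G$, which does not give $\ua_Q H_i\subseteq\ua_Q G$ for any $i$; the standard repair is to suppose no $\ua_Q H_i$ is contained in $\ua_Q G$, verify that $\{H_i\setminus\ua_Q G\}$ is still Smyth-directed (using only that $\ua_Q G$ is an upper set), and apply Rudin to \emph{that} family to get a directed set disjoint from $\ua_Q G$ whose supremum lies in $\ua_Q F$, contradicting $G\ll_Q F$. The paper bypasses all of this by citing Heckmann--Keimel: a dcpo is quasicontinuous iff its finitely generated upper sets form a continuous domain under reverse inclusion, which by Lemma~\ref{lemma2} is exactly $\mathcal O(X)$.
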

\begin{proof}
    As $\mathcal O(X)$ is countable, the equivalence between $(1)$ and $(2)$ is shown in \cite{jia-coutabledomains}, where the authors show that countable continuous  domains are actually algebraic domains. 

    For the implication from $(2)$ to $(4)$, 
    we note that $\mathcal O(X)$ is order isomorphic to the set $(\mathcal C(X), \supseteq)$ of closed subsets of $X$ in the reverse inclusion order. By Lemma~\ref{lemma2}, all closed subsets of $X$ are of the form $\da F$,  where $F$ is a finite subset of $X$. Assume that $\mathcal O(X)$ is algebraic. So $(\mathcal C(X), \supseteq)$ is algebraic. 
     One could see that $\da F\ll \da F \ll \da x$ in  $\mathcal C(X)$ implies that $F\ll F\ll x$ in $(X, \geq)$. Hence the algebraicity of $\mathcal O(X)$ implies quasialgebraicity of $(X, \geq)$.

    That $(4)$ implies $(3)$ is obvious, as quasialgebraic domains are always quasicontinuous. 

    Consider now that $(X, \geq)$ is a dcpo.
    We finish the proof by showing the equivalence between $(1)$ and $(3)$.
    By \cite[Proposition 4.5]{heckmann13}, a dcpo $P$ is quasicontinuous if and only if the family of all $\ua F$, where $F$ are finite subsets of $P$, is continuous in the reverse inclusion order\footnote{That $P$ is a dcpo is an indispensable condition.}. Applying this result to $(X, \geq)$, we have that $(X, \geq)$ is quasicontinuous if and only if all $\da F$, where $F$ are finite subsets of $X$, form a continuous domain in the reverse inclusion order. Lemma~\ref{lemma2} tells us that all such $\da F$'s are precisely all the closed subsets of $X$. So $(X, \geq)$ is quasicontinuous if and only if $\mathcal O(X)$ is continuous. 
\end{proof}

In the light of Theorem~\ref{main}, we now present a countable frame that fails to be a continuous lattice.

\begin{example}\label{example-counter}
    \begin{figure}[t]
    \centering
        \begin{tikzpicture}[
        dot/.style={circle,fill,inner sep=1.2pt},
        >=stealth, ]
        \node[dot,label=above:{$-1$}] (top) at (0,4) {};
        \node[dot,label=below:{$-\infty$}] (bot) at (0,-4) {};
        \def\xs{-2,-1,0,1}
        \def\ys{2,1,0,-1,-2,-3}
    
        \foreach \x in \xs {
            \foreach \y in {2,1,0,-1} {
                \node[dot] at (\x,\y) {};
            }
            \draw[dashed] (\x,-1) -- (\x,-3);
            \draw (top) -- (\x,2);
            \draw (\x,-1) -- (\x,2);
        }

        \fill[gray!30, opacity=0.5] (0, 0.45) ellipse (0.4 and 3.8);
        
        \foreach \y in \ys {
            \draw[dashed] (2,\y) -- (4,\y);
        }
        \draw[dashed] (top) -- (3,2);
        \end{tikzpicture}
    \caption{$P$}
    \label{fig:example}
    \end{figure}
    Consider the complete lattice $P$ obtained by taking infinitely many disjoint copies of the extended negative numbers with $-\infty$ (with the usual order) and gluing them together at $-\infty$ and $-1$. The poset $P$ is depicted in Picture~\ref{fig:example}.  
    For the copies of negative numbers (without $-\infty$), we call them columns of $P$. For example, in Picture~\ref{fig:example} elements in the gray area form a column of $P$. 
    We give the upper topology $\nu P$ to $P$. One could verify that the closed subsets of $P$ are of the form $\da F$, where each $F$ is finite subset of $P$. Then we know that for a proper closed subset of the form $\da F$, $F$ only ranges on finitely many columns. So all opens on $P$ form a countable frame. Additionally, as $P$ is a complete lattice, $P$ is sober in the upper topology~\cite[Proposition~1.7]{schalk93a}. However, the order dual of $P$ is a typical complete lattice that fails to be quasicontinuous. By Theorem~\ref{main}, the open set lattice $\mathcal \nu P$ cannot be a continuous lattice.  
\end{example}

\begin{remark}
    If we remove the least element $-\infty$ from the complete lattice~$P$ in Example~\ref{example-counter}, then the resulting poset $Q$ is a complete sup-semilattice. Again by~\cite[Proposition~1.7]{schalk93a}, $Q$ is sober in the upper topology $\nu Q$ and it has only countably many opens. 
    We verify that $(Q, \nu Q)$ fails to be locally compact neither. 
    Suppose that $K$ is a compact saturated set of $Q$, and $K$ contains a nonempty open subset $O = Q \setminus \da G$, where $G$ is finite. Then there is a whole column $I$ of $Q$ such that $I$ is contained in $K$ (take any column $I$ that does not insect the finite set $G$). Note that $K$ is compact, the column $I$ must be above some minimal element in $K$. However, no elements in $Q$ are below all elements in $I$. So we reach a contradiction. It means that every compact saturated subset of $Q$ has empty interior. Thus, $P$ is not locally compact.
    Note that the dual poset of $Q$ is a continuous poset. This shows that in general one cannot expect an equivalence between  Statements $(1)$ and $(3)$ in Theorem~\ref{main}.
\end{remark}

\section{A partial answer to Lawson and Mislove's question}

Now we consider countable frames that arise as Scott open set lattices of dcpo's, and we will see that such frames are actually continuous, hence algebraic, and they are even hypercontinuous lattices.  
We use $\Sigma L$ to denote the resulting Scott space by endowing the Scott topology $\sigma L$ on the dcpo $L$. 
We will need the following result, which can be found as \cite[Theorem 2.5.7]{jia2018meet}.

\begin{lemma}\label{jia18thesis}
    Let L be a dcpo. Then the following statements are equivalent:
  \begin{enumerate}
      \item $L$ is core-compact, i.e., $\sigma L$ is a continuous lattice in the inclusion order;
      \item for every dcpo $S$, one has $\Sigma (S\times L) = \Sigma S \times \Sigma L$ \footnote{Note that these two products are taken in the category of dcpos and that of topological spaces, respectively.};
      \item For every complete lattice $S$, one has $\Sigma (S\times L) = \Sigma S \times \Sigma L$;
      \item $\Sigma (\sigma L\times L) = \Sigma \sigma L \times \Sigma L$.
  \end{enumerate}
    
\end{lemma}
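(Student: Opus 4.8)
The plan is to prove the cycle of implications $(2)\Rightarrow(3)\Rightarrow(4)\Rightarrow(1)\Rightarrow(2)$. Throughout I will use that suprema in a product dcpo are computed coordinatewise, so that the two projections $\Sigma(S\times L)\to\Sigma S$ and $\Sigma(S\times L)\to\Sigma L$ are Scott-continuous; this already gives the inclusion of the product topology into $\sigma(S\times L)$ for all dcpos $S,L$, so in every clause the real content is the reverse inclusion, namely that each Scott-open subset of the product dcpo is a union of ``open rectangles'' $U\times V$ with $U\in\sigma S$ and $V\in\sigma L$. The implications $(2)\Rightarrow(3)$ and $(3)\Rightarrow(4)$ are immediate, since complete lattices are dcpos and $(\sigma L,\subseteq)$ is a complete lattice.

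For $(4)\Rightarrow(1)$ I would use the membership relation $\Phi=\{(V,x)\in\sigma L\times L : x\in V\}$. First one checks $\Phi\in\sigma(\sigma L\times L)$: it is an upper set because each open $V$ is upward closed, and if a directed family $(V_i,x_i)$ has supremum $(\bigcup_i V_i,\,\sup_i x_i)$ lying in $\Phi$, then $\sup_i x_i\in V_j$ for some $j$, hence $x_k\in V_j$ for some $k$ (as $V_j$ is Scott-open and $(x_i)$ is directed), and any index $l$ above $j$ and $k$ gives $(V_l,x_l)\in\Phi$. Now apply $(4)$: $\Phi$ is open in $\Sigma\sigma L\times\Sigma L$, so each $(V,x)\in\Phi$ has a basic neighbourhood $\mathcal A\times V_0\subseteq\Phi$ with $V\in\mathcal A\in\sigma(\sigma L)$ and $x\in V_0\in\sigma L$; the inclusion $\mathcal A\times V_0\subseteq\Phi$ says precisely that every member of $\mathcal A$ contains $V_0$. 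From the Scott-openness of $\mathcal A$ one reads off $V_0\ll V$ in $(\sigma L,\subseteq)$: if $V\subseteq\bigcup_i V_i$ for a directed family $(V_i)$ of Scott-opens, then $\bigcup_i V_i\in\mathcal A$ since $\mathcal A$ is an upper set, so $V_i\in\mathcal A$ for some $i$, whence $V_0\subseteq V_i$. Since $x\in V$ was arbitrary, $V=\bigcup\dda V$; as $\dda V$ is closed under finite unions (using that $A\ll C$ and $B\ll C$ imply $A\cup B\ll C$) it is directed, so $\sigma L$ is continuous, i.e. $L$ is core-compact.

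For $(1)\Rightarrow(2)$, fix a dcpo $S$, a Scott-open $W\subseteq S\times L$, and $(s_0,x_0)\in W$. Write $W_s=\{x\in L:(s,x)\in W\}$; each $W_s$ is Scott-open, and the crucial computation is that $s\mapsto W_s\colon S\to(\sigma L,\subseteq)$ is Scott-continuous, i.e. $W_{\sup D}=\bigcup_{d\in D}W_d$ for every directed $D\subseteq S$, which follows from $W$ being Scott-open together with coordinatewise suprema. Using continuity of $\sigma L$, pick $V\ll W_{s_0}$ in $(\sigma L,\subseteq)$ with $x_0\in V$, and set $U=\{s\in S:V\ll W_s\text{ in }\sigma L\}$. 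Then $s_0\in U$; $U$ is an upper set because $s\mapsto W_s$ is monotone and $a\ll b\le c$ implies $a\ll c$; and $U\times V\subseteq W$ because $V\ll W_s$ forces $V\subseteq W_s$. The heart of the matter is that $U$ is Scott-open: if $D\subseteq S$ is directed with $\sup D\in U$, then $V\ll W_{\sup D}=\bigcup_{d\in D}W_d$, so by the interpolation property of the continuous lattice $\sigma L$ there is $V'$ with $V\ll V'\ll\bigcup_{d\in D}W_d$; since $(W_d)_{d\in D}$ is directed, $V'\subseteq W_d$ for some $d$, and then $V\ll W_d$, i.e. $d\in U$. Thus $(s_0,x_0)\in U\times V\subseteq W$ with $U\in\sigma S$ and $V\in\sigma L$, so $W$ is open in the product topology, which completes the cycle.

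I expect $(1)\Rightarrow(2)$, and within it the proof that $U$ is Scott-open, to be the main obstacle: a naive choice of $U$ only yields $V\subseteq W_d$ rather than $V\ll W_d$ for some $d$, which is not enough to keep $d$ inside $U$, and it is exactly the interpolation property — equivalently, the continuity of $\sigma L$, i.e. the core-compactness of $L$ — that repairs this. A secondary, more routine difficulty is the careful bookkeeping showing that $\Phi$ and the sections $W_s$ interact correctly with directed suprema, all of which rests on suprema in product dcpos being coordinatewise.
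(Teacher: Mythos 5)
Your proof is correct and complete, including the two genuinely delicate points: checking that the membership relation $\Phi\subseteq\sigma L\times L$ is Scott open and extracting $V_0\ll V$ from a basic product neighbourhood in $(4)\Rightarrow(1)$, and using interpolation in the continuous lattice $\sigma L$ to show $U=\{s: V\ll W_s\}$ is Scott open in $(1)\Rightarrow(2)$. The paper does not prove this lemma itself --- it cites Theorem~2.5.7 of the first author's thesis --- and your argument is precisely the standard one given there (and in the core-compactness/exponentiability literature), so there is nothing to flag.
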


\begin{proposition}\label{prop-scott-countable}
    Let $P$ be a sober Scott space with only countably many Scott open sets. Then $\sigma P$ is an algebraic lattice. 
\end{proposition}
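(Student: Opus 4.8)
The plan is the following. By Lemma~\ref{lemma2}, every Scott-closed subset of $P$ has the form $\da F$ with $F$ finite, and dually every Scott-open set has the form $P\setminus\da F$. I will show that $\sigma P=\mathcal O(P)$ is a \emph{continuous} lattice; since $\sigma P$ is also countable, it is then algebraic by the fact used already in the proof of Theorem~\ref{main} (from \cite{jia-coutabledomains}) that every countable continuous domain is algebraic, and since $\sigma P$ is moreover a complete lattice, it is an algebraic lattice.

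To prove that $\sigma P$ is continuous, by the Hofmann--Mislove theorem it is equivalent (as $P$ is sober) to prove that $(P,\sigma P)$ is locally compact; in fact I will aim at the stronger statement that the compact-open Scott sets form a basis, which gives algebraicity of $\sigma P$ directly. The starting observation is that $\ua x$ is compact for every $x\in P$: an open cover of $\ua x$ has only countably many distinct members, so it may be replaced by an ascending sequence $U_1\subseteq U_2\subseteq\cdots$ of opens whose union still contains $x$; that union is open, hence an upper set containing $x$, so some $U_n$ already contains $\ua x$. Moreover $\ua x\subseteq U$ whenever $U$ is Scott-open with $x\in U$, since $U$ is an upper set. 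Hence if $x$ is a \emph{compact} element of $P$ (equivalently, $\ua x$ is Scott-open), then $\ua x$ is a compact-open neighbourhood of $x$ contained in $U$, and we are done. More generally it would suffice to show that $P$ itself is an algebraic domain: then for $x\in U$ the directed set of compact elements below $x$ has supremum $x\in U$, so it meets the Scott-open set $U$ in some compact $k$, and $\ua k$ is a compact-open set with $x\in\ua k\subseteq U$.

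The remaining, and main, difficulty is to treat points $x$ that are not compact elements. Here the Scott-opens $V$ with $x\in V\subseteq U$ form a countable filter base whose intersection is $\ua x$; realizing it as a descending chain $V_1\supseteq V_2\supseteq\cdots$ with $\bigcap_n V_n=\ua x$ and choosing $p_n\in V_n\setminus\ua x$, one checks that $x\in\overline{\{p_m:m\ge n\}}$ for every $n$, and each of these closed sets has only finitely many maximal elements by Lemma~\ref{lemma1}. I expect the clean way to conclude is through Lemma~\ref{jia18thesis}: to show $\sigma P$ continuous it is enough to verify $\Sigma(\sigma P\times P)=\Sigma\sigma P\times\Sigma P$, and since the product topology is always contained in the Scott topology of a product of dcpos, one only needs that every Scott-open subset of the dcpo $\sigma P\times P$ is open in the product topology — an argument that should run on the countability of $\sigma P$ together with the description of closed sets in Lemma~\ref{lemma2}.

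It is worth stressing why one cannot shortcut this by invoking Theorem~\ref{main}: although $(P,\sigma P)$ is a sober space with a countable open set lattice, the dual poset $(P,\ge)$ need not be a dcpo — a down-directed subset of $P$ may have two incomparable maximal lower bounds — so the equivalence of statements $(1)$ and $(3)$ there is unavailable. The real content is therefore the interaction between soberness and the countability of the topology that excludes the pathology in which every neighbourhood of a non-compact point keeps splitting into strictly smaller ones with no compact ``core''. This is the step I expect to be the main obstacle, and the place where Lemma~\ref{jia18thesis} does the essential work.
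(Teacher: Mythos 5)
Your overall architecture matches the paper's: reduce to showing that $\sigma P$ is a continuous lattice, invoke the countable-continuous-implies-algebraic result of \cite{jia-coutabledomains}, and identify the product identity $\Sigma(\sigma P\times P)=\Sigma\sigma P\times\Sigma P$ from Lemma~\ref{jia18thesis} as the route to continuity of $\sigma P$ (equivalently, core-compactness of $P$). You are also right that Theorem~\ref{main} cannot simply be invoked, since $(P,\ge)$ need not be a dcpo. Your preliminary observations are fine, though note that $\ua x$ is compact in \emph{any} $T_0$ space with no countability needed: every open set containing $x$ is an upper set for the specialization order and hence already contains $\ua x$.

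The genuine gap is that the one step carrying the entire weight of the proof --- that every Scott-open subset of the dcpo $\sigma P\times P$ is open in the product topology --- is never proved; you only write that the argument ``should run on the countability of $\sigma P$ together with the description of closed sets in Lemma~\ref{lemma2}.'' That is precisely the non-routine content of the proposition. The paper imports it from \cite[Lemma~3.1]{miao2024productssoberdcposneed}, a result on Scott topologies of products of countable dcpos (note that $P$ itself is countable, since distinct points have distinct closures and there are only countably many closed sets), and even then it does not apply that result directly to $\sigma P\times P$: it first applies it to $\sigma P\times\sigma\sigma P$ and uses the equivalence $(1)\Leftrightarrow(4)$ of Lemma~\ref{jia18thesis} to conclude that $\sigma P$ is core-compact, then uses $(1)\Rightarrow(2)$ to obtain $\Sigma(P\times\sigma P)=\Sigma P\times\Sigma\sigma P$, and finally applies $(4)\Rightarrow(1)$ again to conclude that $P$ is core-compact. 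Your side remarks about non-compact points (the descending chain $V_1\supseteq V_2\supseteq\cdots$ with intersection $\ua x$, the points $p_n$, and the finiteness of $\max\overline{\{p_m\mid m\ge n\}}$ from Lemma~\ref{lemma1}) are not wrong, but they are abandoned rather than developed into an argument and do not substitute for the missing step. As it stands, the proposal is a correct plan whose central lemma is left unproved.
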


\begin{proof}
    Since $P$ and $\sigma P$ are countable dcpo's, by \cite[Lemma 3.1]{miao2024productssoberdcposneed}, we know that the Scott topology on $\sigma P \times \sigma \sigma P$ is the same as the product topology on $\Sigma \sigma P \times \Sigma \sigma \sigma P$. The equivalence between $(1)$ and $(4)$ in Lemma~\ref{jia18thesis} implies that $\sigma P$ is core-compact. That is, $\sigma \sigma P$ is a continuous lattice in the inclusion order. By the equivalence between $(1)$ and $(2)$ in Lemma~\ref{jia18thesis}, we know that the Scott topology on $P\times \sigma P$ is then equal to the product topology on $\Sigma P \times \Sigma \sigma P$. Now applying the equivalence between $(1)$ and $(4)$ in Lemma~\ref{jia18thesis} again, we know that $P$ is core-compact. So $\sigma P$ is a countable continuous lattice, and hence it is an algebraic lattice by results in~\cite{jia-coutabledomains}. 
\end{proof}

As promised, the following theorem provides a partial answer to Lawson and Mislove’s question  for characterizing those frames for which the spectrum is a Scott space. The general question still remains open. 

\begin{theorem}\label{scott-main-theorem}
    Let $L$ be a countable frame and $\mathrm{pt}(L)$ be its spectrum. Then the following are equivalent. 
    \begin{enumerate}
        \item $\mathrm{pt}(L)$ is a Scott space;
        \item $\mathrm{pt}(L)$ is a locally compact sober Scott space;
        \item $\mathrm{pt}(L)$ is quasicontinuous;
        \item $\pt{L}$ is quasialgebraic;
        \item $L$ is quasicontinuous;
        \item $L$ is quasialgebraic;
        \item $L$ is continuous;
        \item $L$ is algebraic;
        \item $L$ is hypercontinuous. 
    \end{enumerate}
\end{theorem}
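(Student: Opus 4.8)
The plan is to pass at once to the spectrum $X:=\pt L$. Since every countable frame is spatial, $X$ is a sober space with $\mathcal O(X)\cong L$ and only countably many open sets, so Lemma~\ref{lemma2} applies: the topology of $X$ is the upper topology of $(X,\le)$, and every closed subset of $X$ has the form $\da F$ with $F$ finite. Rather than verify all pairwise equivalences, I would run a single cycle of implications that visits all nine statements and passes through the node ``$L$ is continuous''.

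Most of the cycle is routine. $(7)\Leftrightarrow(8)$ is \cite{jia-coutabledomains}, using that $L$ is countable; $(9)\Rightarrow(7)$ because hypercontinuous lattices are continuous; and $(8)\Rightarrow(6)\Rightarrow(5)$, $(8)\Rightarrow(4)\Rightarrow(3)$ are the trivial ``algebraic $\Rightarrow$ quasialgebraic $\Rightarrow$ quasicontinuous'' implications, the last chain read on $\pt L$ in the dual-order sense of Theorem~\ref{main}. To return to $(7)$: for $(3)\Rightarrow(7)$ note that a quasicontinuous domain is a dcpo, so the final clause of Theorem~\ref{main} applies to $(X,\ge)$ and forces $\mathcal O(X)\cong L$ to be continuous (equivalently, apply the Heckmann criterion used in the proof of Theorem~\ref{main} to the dcpo $(X,\ge)$, whose finitely generated upper sets are, by Lemma~\ref{lemma2}, exactly the closed sets of $X$, i.e.\ constitute $(\mathcal C(X),\supseteq)\cong L$). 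For $(5)\Rightarrow(7)$ I would use that $L$ is a complete lattice, so quasicontinuity already yields that each $\dda x$ is directed --- the Smyth-directed family $\fin(x)$ produces the directed set of meets $\{\,\bigwedge F : F\in\fin(x)\,\}$, which lies below $x$ and absorbs every $y\ll x$ --- and then a supplementary argument using countability of $L$ promotes this to $x=\sup\dda x$. On the space side, $(2)\Rightarrow(1)$ is immediate, $(1)\Rightarrow(8)$ is precisely Proposition~\ref{prop-scott-countable} (a sober Scott space with countably many Scott opens has $\sigma$-lattice algebraic), and $(7)\Leftrightarrow$ ``$X$ is locally compact sober'' is the Hofmann--Mislove characterization recalled in the introduction. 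Finally $(2)\Rightarrow(9)$: once $X$ is known to be a Scott space with $\sigma X=L$ continuous, its underlying dcpo is core-compact and countable, hence quasicontinuous, so the duality between quasicontinuous domains (with the Scott topology) and distributive hypercontinuous lattices gives that $L$ is hypercontinuous.

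The one genuinely hard link is $(7)\Rightarrow(2)$ --- concretely, the assertion that continuity (equivalently algebraicity) of $L$ forces $\pt L$ to be a \emph{Scott} space, not merely locally compact sober. By Lemma~\ref{lemma2} the topology of $X$ is already the upper topology of $(X,\le)$, so what has to be proved is that \emph{every} Scott-closed subset $C$ of $(X,\le)$ is a finitely generated down-set $\da F$; Lemma~\ref{lemma1} delivers this only for the hull-kernel-closed sets. My plan here is to feed algebraicity of $L$ in through $L\cong(\mathcal C(X),\supseteq)$ --- whence the compact-open subsets of $X$ form a basis --- and then, assuming for contradiction that $\max C$ is infinite for some Scott-closed $C$, to imitate the point-separating pigeonhole construction of Lemma~\ref{lemma1}, now separating points of $\max C$ by basic compact-open neighbourhoods, so as to manufacture uncountably many distinct open sets and contradict countability of $\mathcal O(X)$. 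I expect this to need real care; an alternative, possibly cleaner, route is to argue as in Proposition~\ref{prop-scott-countable} and Lemma~\ref{jia18thesis}, applying the core-compactness criteria directly to $\Sigma X$ and to the countable dcpo product $X\times\sigma X$. Extracting the Scott-space (and hence hypercontinuity) conclusion from continuity of $L$, rather than the straightforward converse, is where the substance of the theorem lies.
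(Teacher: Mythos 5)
Your overall cycle is reasonable, and several links ($(7)\Leftrightarrow(8)$ via \cite{jia-coutabledomains}, $(9)\Rightarrow(7)$, $(1)\Rightarrow(8)$ via Proposition~\ref{prop-scott-countable}, $(2)\Rightarrow(1)$) match the paper. But the implication you yourself flag as ``the one genuinely hard link'' --- getting from continuity/algebraicity of $L$ back to $\pt{L}$ being a (locally compact sober) Scott space --- is exactly where the substance of the theorem lies, and you do not prove it: you offer two tentative strategies and say you expect them to need real care. The first (rerunning the pigeonhole construction of Lemma~\ref{lemma1} on a Scott-closed $C$ with $\max C$ infinite) is not obviously executable, since Lemma~\ref{lemma1} separates points using hull-kernel-open neighbourhoods and the sobriety of that topology, whereas here the whole difficulty is that $C$ need not be hull-kernel closed; the second (arguing as in Proposition~\ref{prop-scott-countable}) is circular, because that proposition assumes from the outset that the space is a sober \emph{Scott} space, which is precisely what is to be established. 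The paper closes this gap by a different route: from $(8)$, the lattice $\sigma(\pt{L})\cong L$ is algebraic, and \cite[Proposition~7(3)]{ERNE20092054} identifies its compact elements as the sets $\ua F$ with $F$ finite and shows that they form a basis; hence $\pt{L}$ is quasialgebraic in its specialization order, and $(4)\Rightarrow(3)\Rightarrow(2)\Rightarrow(1)$ then follows since quasicontinuous domains are locally compact and sober in the Scott topology. Without this (or an equivalent) ingredient your cycle does not close.

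Two smaller points. First, your $(5)\Rightarrow(7)$ is also only sketched (``a supplementary argument using countability of $L$ promotes this to $x=\sup\dda x$''); the paper's argument is cleaner and needs no countability at this step: a frame is meet-continuous, and quasicontinuous meet-continuous dcpo's are continuous \cite[Proposition~III-3.10]{gierz03}. Second, you read statements $(3)$ and $(4)$ in the \emph{dual} of the specialization order, importing them from Theorem~\ref{main}, whereas the paper's proof reads them in the specialization order itself --- this is what makes $(3)\Leftrightarrow(9)$ an instance of the quasicontinuous-domain/hypercontinuous-lattice duality \cite[Proposition~VII-3.8]{gierz03} and what makes $(8)\Rightarrow(4)$ the Ern\'e argument above. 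Under your dual-order reading, $(3)\Rightarrow(7)$ via the last clause of Theorem~\ref{main} additionally requires $(\pt{L},\geq)$ to be a dcpo, which is not automatic; the remark following Example~\ref{example-counter} shows that without the dcpo hypothesis this implication genuinely fails.
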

\begin{proof}
    We prove the following sequence of implications: $$(1)\Rightarrow (8) \Leftrightarrow (7) \Leftrightarrow (6) \Leftrightarrow (5) \Rightarrow (4) \Rightarrow (3) \Leftrightarrow (9) \Rightarrow (2) \Rightarrow (1).$$  
    
\begin{itemize}[itemindent=1.2cm]
    \item[$(1)\Rightarrow (8).$] 
As $\sigma (\pt{L})$ is isomorphic to $L$, hence countable, so $\pt{L}$ is a Scott space with only countably many Scott open sets. That $(1)\Rightarrow (8)$ follows directly from Proposition~\ref{prop-scott-countable}.
    \item[$(5)\Rightarrow (8).$] One can easily see that $(8) \Rightarrow (7) \Rightarrow (5)$ and  $(8) \Rightarrow (6) \Rightarrow (5)$. We prove that $(5)\Rightarrow (8)$. For that, we realize that the frame $L$ is a meet-continuous lattice (binary infima distribute over directed suprema), and quasicontinuous and meet-continuous dcpo's are continuous domains \cite[Proposition III-3.10]{gierz03}. So $L$ is continuous. As $L$ also is countable, it is algebraic by results in~\cite{jia-coutabledomains}. 
    \item[$(8)\Rightarrow (4).$] If $L$ is algebraic, then the Scott open set lattice $\sigma (\pt{L})$ of $\pt{L}$ is algebraic.  By the result in \cite[Proposition~7(3)]{ERNE20092054}, we know that every compact element in $\sigma(\pt{L})$ must be of the form $\ua F$ for some finite subset $F$ of $\pt{L}$, and they form a basis for $\Sigma\pt{L}$. This proves that $\pt{L}$ is quasialgebraic. 
\end{itemize} 
That $(4) \Rightarrow (3) \Rightarrow  (2) \Rightarrow (1)$ is obvious. Finally, the equivalence between $(3)$ and $(9)$ is a well-known result about restricting the Stone duality to the class of quasicontinuous domains and that of hypercontinuous lattices~\cite[Proposition VII-3.8]{gierz03}. 
\end{proof}

\section*{Acknowledgments}
The authors thank Matthew de Brecht for asking the related questions. They also acknowledge the Tianyuan Mathematics Research Center for hosting the Seminar on Domain Theory and Its Applications in December 2025, which the authors found quite a simulating and productive event. The first author thanks Dr.\,Zhenchao Lyu for pointing to \cite[Proposition~7]{ERNE20092054}. The authors are supported by NSF of China (Grants:12371457, 12571507).

\bibliographystyle{plain}

\end{document}